\newtheorem{theorem}{Theorem}[section]
\newtheorem{corollary}{Corollary}[section]
\title{{Inclusion-exclusion by ordering-free cancellation}}
\author{Yin Chen, Jianguo Qian\thanks{Corresponding author: jgqian@xmu.edu.cn.}\\
{\footnotesize School of Mathematical Sciences, Xiamen University, Xiamen 361005, P. R. China}}
\date{}
\begin{document}
 \maketitle
\begin{abstract}
Whitney's broken circuit theorem gives a graphical example to reduce the number of the terms in the sum of the inclusion-exclusion formula by a  predicted cancellation.  So far, the known cancellations for the formula strongly depend on the prescribed (linear or partial) ordering on the index set. We give a new cancellation method, which does not require any ordering on the index set. Our method extends all the `ordering-based' methods known in the literatures and in general reduces more terms. As examples, we use our method to improve some relevant results on graph polynomials.  \\

\noindent\textbf{Keywords:} inclusion-exclusion principle; ordering-free cancellation; graph polynomial; broken set
\end{abstract}
\section{Introduction}
\label{intro}

Let $(\Omega,\mathscr{A},\mu)$ be a measure space, $P$ be a finite index set and $\{A_p\}_{p\in P}\subseteq \mathscr{A}$ be a family of measurable sets.  The formula
\begin{equation}
\mu\left(\bigcap_{p\in P}{\overline{A}_p}\right)=\sum_{I\subseteq P}(-1)^{|I|}\mu\left(\bigcap_{i\in I}A_i\right)
\end{equation}
is known as the principle of inclusion-exclusion, where $\overline{A}_p$ denotes the complement of $A_p$.

The principle of inclusion-exclusion is a classic counting technique in combinatorics and has been extensively studied \cite{Dohmen02,Dohmen01,Narushima01,Riordan01,Rota01,Whitney01}. Since the sum on the right side of Eq.(1) ranges over a large number of terms, it is natural to ask whether fewer terms would give the same result, that is, is it possible to reduce the number of terms by predicted cancellation? Lots of the answers to this question have been given by several authors. A well-known example is the one given by Whitney \cite{Whitney01} in 1932 for chromatic polynomial of a graph, which states that the calculation of  a chromatic polynomial can be restricted to the collection of those sets of edges which do not include any broken circuit as a subset.

Various cancellations for the inclusion-exclusion  principle were given from the perspective of both combinatorics and graph theory in the literatures. In \cite{Narushima01}, Narushima presented a cancellation for the inclusion-exclusion  principle, depending on a prescribed ordering on the index set $P$. This result was later improved by Dohmen \cite{Dohmen02}. Using the same technique,  Dohmen \cite{Dohmen01} also established an abstraction of Whitney's broken circuit theorem, which  not only applies to the chromatic polynomial, but also to other graph polynomials, see \cite{Dohmen05,Dohmen08,Dohmen01,Liao01,Trinks} for details.

 So far, the known cancellation methods for  inclusion-exclusion principle strongly depend on the prescribed (linear or partial) ordering on the index set $P$. In this article we establish a new cancellation method, which  does not require any ordering on $P$. Our method extends all the `ordering-based' methods given in the previous literatures and in general may reduce more terms. As examples,  we use our `ordering-free' method to improve the relevant results on the chromatic polynomial of hypergraphs, the independence polynomial and domination polynomial of graphs.

\section{Inclusion-exclusion by predicted cancellations}

For a subset $B$ of a poset (partially ordered set) $P$, let $B'$ denote
the set of upper bounds of $B$ which are not in $B$, that is,
\begin{equation*}
B'=\{p\in P:p>b\ \ {\rm for\ any}\ \ b\in B\}.
\end{equation*}

In \cite{Narushima01}, Narushima presented a cancellation for the inclusion-exclusion  principle on semilattices. This result was later extended to many forms. The  following one was given by Dohmen \cite{Dohmen02}:

\begin{theorem}\label{thm1} \cite{Dohmen02} Let $(\Omega,\mathscr{A},\mu)$ be a measure space, $P$  be a poset and $\{A_p\}_{p\in P}\subseteq \mathscr{A}$ be a family of measurable sets. If $\mathfrak{X}$ is a class of subsets of $P$ such that
\begin{equation}
\bigcap_{p\in B}A_p\subseteq \bigcup_{p\in B'}A_{p}
\end{equation}
for each $B\in\mathfrak{X}$. Then
\begin{equation}\label{main}
\mu\left(\bigcap_{p\in P}{\overline{A}_p}\right)=\sum_{I\in 2^{P}\setminus\mathfrak{I}}(-1)^{|I|}\mu\left(\bigcap_{i\in I}A_i\right),
\end{equation}
where $2^P$ is the power set of $P$ and $\mathfrak{I}=\{I\subseteq P:I\supseteq B\ \ {\rm for\ some}\ \ B\in\mathfrak{X}\}.$
\end{theorem}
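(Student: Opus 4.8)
The plan is to keep the full inclusion--exclusion identity~(1) and to show that all of the terms indexed by $\mathfrak I$ cancel, i.e.\ that
\begin{equation*}
\sum_{I\in\mathfrak I}(-1)^{|I|}\mu\!\left(\bigcap_{i\in I}A_i\right)=0 ;
\end{equation*}
once this is known, (1) collapses to the sum over $2^P\setminus\mathfrak I$, which is exactly~\eqref{main}. Since $P$ is finite, the left-hand side is a finite combination of measures, so (inheriting whatever finiteness hypotheses make~(1) meaningful) it is enough to prove the pointwise identity of indicator functions $\sum_{I\in\mathfrak I}(-1)^{|I|}\mathbf 1_{\bigcap_{i\in I}A_i}\equiv 0$ on $\Omega$. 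Fix $\omega\in\Omega$ and set $S=S(\omega):=\{p\in P:\omega\in A_p\}$; then $\mathbf 1_{\bigcap_{i\in I}A_i}(\omega)=1$ if and only if $I\subseteq S$, so the claim reduces to the combinatorial identity
\begin{equation*}
\sum_{I\in\mathcal U}(-1)^{|I|}=0,\qquad \mathcal U:=\bigl\{\,I\subseteq S:\ B\subseteq I\text{ for some }B\in\mathfrak X_S\,\bigr\},\quad \mathfrak X_S:=\{B\in\mathfrak X:B\subseteq S\}.
\end{equation*}

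Observe that $\mathcal U$ is an up-set in the Boolean lattice $2^S$. The key idea is to build a fixed-point-free, parity-reversing involution on $\mathcal U$, namely $I\mapsto I\,\triangle\,\{v\}$ for a well-chosen $v\in S$. For this map to send $\mathcal U$ into itself it suffices that $v$ belong to no inclusion-minimal member of $\mathfrak X_S$: given $I\in\mathcal U$, choose a minimal $B\in\mathfrak X_S$ with $B\subseteq I$ (possible since $\mathfrak X_S$ is finite); as $v\notin B$ we get $B\subseteq I\setminus\{v\}$ and $B\subseteq I\cup\{v\}\subseteq S$, so whichever of these is the image of $I$ again lies in $\mathcal U$. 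Since $|I\,\triangle\,\{v\}|=|I|\pm1$, the involution partitions $\mathcal U$ into pairs whose contributions to $\sum_{I\in\mathcal U}(-1)^{|I|}$ cancel, proving the identity. (If $\mathfrak X_S=\emptyset$ then $\mathcal U=\emptyset$ and there is nothing to show.)

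The hard part will be producing such a $v$, and this is exactly where hypothesis~(2) and the poset structure of $P$ enter. Suppose, for contradiction, that every element of $S$ lies in some inclusion-minimal member of $\mathfrak X_S$. Pick any minimal $B_0\in\mathfrak X_S$. Since $B_0\subseteq S$ we have $\omega\in\bigcap_{p\in B_0}A_p$, so~(2) forces $\omega\in\bigcup_{p\in B_0'}A_p$ and hence gives some $q_0\in B_0'\cap S$. By definition of $B_0'$, $q_0>b$ for every $b\in B_0$, so $q_0\notin B_0$; by the assumed covering, $q_0$ lies in some minimal $B_1\in\mathfrak X_S$. Iterating this construction --- apply~(2) to $B_1$ to obtain $q_1\in B_1'\cap S$, note that $q_0\in B_1$ forces $q_1>q_0$, choose a minimal $B_2\ni q_1$, and so on --- yields an infinite strictly increasing chain $q_0<q_1<q_2<\cdots$ in the finite poset $P$, a contradiction. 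Hence the required $v$ exists.

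Putting everything together, the involution gives $\sum_{I\in\mathfrak I}(-1)^{|I|}\mathbf 1_{\bigcap_{i\in I}A_i}\equiv 0$ on $\Omega$; integrating the finite sum yields $\sum_{I\in\mathfrak I}(-1)^{|I|}\mu(\bigcap_{i\in I}A_i)=0$, and then~(1) reads precisely~\eqref{main}. The only routine points I foresee are the finiteness bookkeeping inherited from~(1), the trivial degenerate case $\emptyset\in\mathfrak X$ (where both sides of~\eqref{main} vanish), and the elementary fact that every member of the finite family $\mathfrak X_S$ sits above an inclusion-minimal one.
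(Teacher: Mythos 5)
Your proof is correct, but it takes a genuinely different route from the paper. The paper never proves Theorem \ref{thm1} directly: it is quoted from Dohmen, and the authors instead prove the ordering-free Theorem \ref{thm2} and then show in the Remark that $\mathfrak{I}=\mathscr{B}$ for a suitable choice and ordering of the pairs $\{B_i,B_i'\}$, so that Theorem \ref{thm1} falls out as a special case. Their cancellation mechanism is global and set-indexed: each $\mathscr{B}_i$ is partitioned into blocks $\langle I\rangle=\{I^*\cup D : D\subseteq B_i^*\}$, and the signed sum over a block telescopes (by inclusion-exclusion over $B_i^*$) to $(-1)^{|I^*|}\mu\bigl(\bigcap_{p\in I^*}A_p\cap\bigcap_{p\in B_i^*}\overline{A}_p\bigr)$, which vanishes by the hypothesis (2)/(5). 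You instead reduce pointwise to indicator functions, fix $\omega$ with support $S(\omega)$, and kill the up-set $\mathcal U$ generated by $\mathfrak X_S$ with the sign-reversing involution $I\mapsto I\,\triangle\,\{v\}$; the poset hypothesis enters only through your ascending-chain argument producing a $v\in S$ outside every inclusion-minimal member of $\mathfrak X_S$. That chain argument is sound (a strictly increasing sequence $q_0<q_1<\cdots$ cannot exist in a finite poset), and the involution is well defined precisely because minimal witnesses avoid $v$. What each approach buys: yours is more elementary and self-contained for Theorem \ref{thm1}, and localizes the use of the ordering to a single existence claim; the paper's block-cancellation is coarser per point but is exactly what survives when the ordering is discarded, which is why it generalizes to Theorem \ref{thm2} while a single global $v$ need not exist there. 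Your two flagged caveats are genuine but routine: integrating the indicator identity needs the same implicit finiteness of $\mu$ that makes (1) meaningful, and in the degenerate case $\emptyset\in\mathfrak X$ one should observe that (2) forces $S(\omega)\neq\emptyset$, after which $\mathcal U=2^{S}$ and the alternating sum still vanishes.
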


Let $\{B_1,B^*_1\},\{B_2,B^*_2\},\cdots,\{B_k,B^*_k\}$ be pairs of subsets of $P$ with $B_i\cap B^*_i=\emptyset$ for every $i\in\{1,2,\cdots,k\}$. Denote
$$\mathscr{B}_i=\{I\subseteq P: I\supseteq B_i,  I\nsupseteq B_j\setminus B^*_i\  {\rm for}\ j<i\}$$
and
 \begin{equation}
 \mathscr{B}=\mathscr{B}_1\cup\mathscr{B}_2\cup\cdots\cup\mathscr{B}_k.
 \end{equation}
We note that $\mathscr{B}_i$ is empty when $B_j\setminus B^*_i\subseteq B_i$ for some $j<i$ since there is no $I$ satisfies the requirement.

We now give our main result which   does not require any ordering on $P$.

\begin{theorem}\label{thm2} Let $(\Omega,\mathscr{A},\mu)$ be a measure space, $P$ be a set and $\{A_p\}_{p\in P}\subseteq \mathscr{A}$ be a family of measurable sets. Let $\{B_1,B^*_1\},\{B_2,B^*_2\},\cdots,\{B_k,B^*_k\}$ be pairs of subsets of $P$. If $B_i\cap B^*_i=\emptyset$ and
\begin{equation}
\bigcap_{p\in B_i}A_p\subseteq \bigcup_{p\in  B^*_i}A_p
\end{equation}
for every $i\in\{1,2,\cdots,k\}$, then
\begin{equation}\label{main}
\mu\left(\bigcap_{p\in P}{\overline{A}_p}\right)=\sum_{I\in 2^P\setminus \mathscr{B}}(-1)^{|I|}\mu\left(\bigcap_{i\in I}A_i\right).
\end{equation}
\end{theorem}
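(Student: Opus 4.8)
The plan is to start from the full inclusion-exclusion formula (1) and show that the terms indexed by $\mathscr{B}$ can be cancelled or absorbed, leaving exactly the sum over $2^P\setminus\mathscr{B}$. Concretely, I would prove the equivalent statement
\begin{equation*}
\sum_{I\in\mathscr{B}}(-1)^{|I|}\mu\left(\bigcap_{i\in I}A_i\right)=0,
\end{equation*}
which, combined with (1), yields (\ref{main}). The natural strategy is to decompose $\mathscr{B}$ according to the first index $i$ for which a set $B_i$ is contained in $I$: since $\mathscr{B}_i$ already excludes those $I$ containing $B_j\setminus B^*_i$ for $j<i$, one hopes the $\mathscr{B}_i$ are (nearly) disjoint and can be treated one at a time. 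So I would try to write $\mathscr{B}$ as a disjoint union of pieces and show each piece contributes zero.

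The key computational device is the elementary identity behind the hypothesis (6): if $\bigcap_{p\in B_i}A_p\subseteq\bigcup_{p\in B^*_i}A_p$, then for any set $I\supseteq B_i$,
\begin{equation*}
\mu\left(\bigcap_{p\in I}A_p\right)=\sum_{\emptyset\neq S\subseteq B^*_i}(-1)^{|S|+1}\mu\left(\bigcap_{p\in I\cup S}A_p\right),
\end{equation*}
obtained by writing $\bigcap_{p\in I}A_p=\bigcap_{p\in I}A_p\cap\bigcup_{p\in B^*_i}A_p$ and expanding the union by inclusion-exclusion (this is exactly the manipulation underlying Theorem~\ref{thm1}; indeed Theorem~\ref{thm1} is the special case where the pairs come from a poset and $B^*_i=B_i'$, so I would aim to mimic and generalize Dohmen's argument). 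The idea is that each term with $I\in\mathscr{B}_i$ can be rewritten as an alternating sum of terms with strictly larger index sets, and one has to check that these replacement terms, together with the signs, exactly cancel against the remaining members of $\mathscr{B}$ — or, more cleanly, set up an involution on $\mathscr{B}$ (or on $\mathscr{B}$ together with a suitable auxiliary set) that pairs each $I$ with an $I\triangle\{q\}$ of opposite sign.

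I would actually lean toward the involution approach, since it is the cleanest way to get an ordering-free statement. For $I\in\mathscr{B}$, let $i=i(I)$ be the \emph{smallest} index with $B_i\subseteq I$ and $I\nsupseteq B_j\setminus B^*_i$ for $j<i$; one wants to toggle a canonically chosen element $q=q(I)\in B^*_i$ in or out of $I$. The hypothesis $B_i\cap B^*_i=\emptyset$ guarantees $q\notin B_i$, so toggling $q$ does not destroy $B_i\subseteq I$, and it flips the parity of $|I|$. The main obstacle — and where I expect the real work to be — is showing this map is well-defined and an involution: that $i(I\triangle\{q\})=i(I)$ and $q(I\triangle\{q\})=q(I)$, i.e. that toggling the chosen element of $B^*_i$ neither changes which is the first `triggered' index nor which element gets chosen. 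The condition defining $\mathscr{B}_i$ (excluding $I\supseteq B_j\setminus B^*_i$ for $j<i$) is presumably engineered precisely so that adding an element of $B^*_i$ cannot newly create $B_j\subseteq I$ for some $j<i$ in a way that survives — that is the delicate case analysis to get right. Once the involution is established, grouping the $\mu(\bigcap_{i\in I}A_i)$ terms in fixed-point-free pairs of opposite sign kills the whole $\mathscr{B}$-sum, and (\ref{main}) follows; one should also note the degenerate case where some $\mathscr{B}_i=\emptyset$ (when $B_j\setminus B^*_i\subseteq B_i$ for some $j<i$), which is harmless since such $i$ simply contribute nothing.
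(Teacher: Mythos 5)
Your target identity $\sum_{I\in\mathscr{B}}(-1)^{|I|}\mu\bigl(\bigcap_{i\in I}A_i\bigr)=0$ and your decomposition of $\mathscr{B}$ into the $\mathscr{B}_i$ match the paper (disjointness of the $\mathscr{B}_i$ is in fact automatic from the definition, not just hoped for: if $I\in\mathscr{B}_i\cap\mathscr{B}_j$ with $j<i$ then $I\supseteq B_j\supseteq B_j\setminus B^*_i$, contradicting $I\in\mathscr{B}_i$). But the mechanism you commit to --- a sign-reversing involution $I\mapsto I\triangle\{q\}$ for a single canonically chosen $q\in B^*_i$ --- has a genuine gap: the summands are not unit weights but measures $\mu\bigl(\bigcap_{p\in I}A_p\bigr)$, and toggling one element $q$ flips the sign without preserving the measure. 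The paired terms $(-1)^{|I|}\mu\bigl(\bigcap_{p\in I}A_p\bigr)$ and $(-1)^{|I|+1}\mu\bigl(\bigcap_{p\in I\cup\{q\}}A_p\bigr)$ cancel only if $\bigcap_{p\in I}A_p\subseteq A_q$, which the hypothesis $\bigcap_{p\in B_i}A_p\subseteq\bigcup_{p\in B^*_i}A_p$ guarantees only when $|B^*_i|=1$; for $|B^*_i|\geq 2$ no single element of $B^*_i$ need work, so the involution does not kill the sum. This is exactly why the theorem is stated with a set $B^*_i$ rather than a single element, and why a one-element toggle cannot be the whole story.

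The paper's fix is the natural completion of your \emph{first} sketch: for $I\in\mathscr{B}_i$ put $I^*=I\setminus B^*_i$ and group $\mathscr{B}_i$ into full cubes $\langle I\rangle=\{I^*\cup D:D\subseteq B^*_i\}$ (one must check, as you anticipated, that the whole cube stays inside $\mathscr{B}_i$; this uses that removing or adding elements of $B^*_i$ cannot create a copy of $B_j\setminus B^*_i$ inside $I^*$ that was not already in $I$). The alternating sum over a cube telescopes by inclusion--exclusion to $(-1)^{|I^*|}\mu\bigl(\bigcap_{p\in I^*}A_p\cap\bigcap_{p\in B^*_i}\overline{A}_p\bigr)$, which vanishes because $I^*\supseteq B_i$ and hypothesis (5) says $\bigcap_{p\in B_i}A_p$ is disjoint from $\bigcap_{p\in B^*_i}\overline{A}_p$. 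So the cancellation happens $2^{|B^*_i|}$ terms at a time, not in pairs. Your expansion identity is the right computational core, but you would need to replace the involution with this cube partition (or carry out the ``replacement terms cancel against remaining members'' bookkeeping, which amounts to the same thing) to close the argument.
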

\begin{proof} Let $I\in\mathscr{B}$. Then $I\in\mathscr{B}_i$ for some $i\in\{1,2,\cdots,k\}$.  We claim that such $\mathscr{B}_i$ is unique. In fact, suppose to the contrary that  $I\in\mathscr{B}_j$ and, with no loss of generality, that $j<i$. Then by the definition of $\mathscr{B}_i$, $I\nsupseteq B_j$. This contradicts that $I\in \mathscr{B}_j$. As a result,
$$\mathscr{B}_1,\mathscr{B}_2,\cdots,\mathscr{B}_k$$
are pairwise disjoint and therefore,  (4) is a partition of $\mathscr{B}$.

For $I\in \mathscr{B}_i$, let $I^*=I\setminus B^*_i$. Since $I\supseteq B_i$ and $B_i\cap B^*_i=\emptyset$, we have $I^*\supseteq B_i$. We claim that $I^*\cup D^*_i\in\mathscr{B}_i$ for any $D^*_i\subseteq  B^*_i$.

Suppose to the contrary that $I^*\cup D^*_i\notin \mathscr{B}_i$ for some $D^*_i\subseteq B^*_i$.  Since $I^*\cup D^*_i\supseteq I^*\supseteq B_i$, so by the definition of $\mathscr{B}_i$, $I^*\cup D^*_i\supseteq B_j\setminus B^*_i$ for some $j<i$. Thus, $I^*\supseteq B_j\setminus B^*_i$ since $D^*_i\subseteq B^*_i$. Therefore,
$$I\supseteq I^*\supseteq B_j\setminus B^*_i.$$

This is a contradiction because $I\in \mathscr{B}_i$, i.e., $I\nsupseteq B_j\setminus B^*_i$. Our claim follows.

For $I\in \mathscr{B}_i$, let
 $$\langle I\rangle=\{I^*\cup D^{*}_i:D^*_i\subseteq B^*_i\}.$$
Then
$$\sum_{J\in \langle I\rangle}(-1)^{|J|}\mu\left(\bigcap_{p\in J}A_p\right)=\sum_{D^*_i\subseteq B^*_i}(-1)^{|I^*\cup D^*_i|}\mu\left(\bigcap_{p\in I^*}A_p\cap\bigcap_{p\in D^*_i}A_p\right)$$
$$=(-1)^{|I^*|}\sum_{D^*_i\subseteq B^*_i}(-1)^{|D^*_i|}\mu\left(\bigcap_{p\in I^*}A_p\cap\bigcap_{p\in D^*_i}A_p\right)$$
$$=(-1)^{|I^*|}\mu\left(\bigcap_{p\in I^*}A_p\cap\bigcap_{p\in B^*_i}\overline{A}_p\right),$$
where the last equality holds by the principle of inclusion-exclusion. Notice that $\bigcap_{p\in B^*_i}\overline{A}_p$ is the complement of $\bigcup_{p\in B^*_i}A_p$. So by (5),
$$\bigcap_{p\in I^*}A_p\cap\bigcap_{p\in B^*_i}\overline{A}_p=\emptyset$$
 since $I^*\supseteq B_i$. Therefore,
\begin{equation}
\sum_{J\in \langle I\rangle}(-1)^{|J|}\mu\left(\bigcap_{p\in J}A_p\right)=0.
\end{equation}

Finally,  for any $I,J\in  \mathscr{B}_i$, by the definition of $I^*$ we can see that either  $\langle J\rangle\cap \langle I\rangle=\emptyset$ or $\langle J\rangle=\langle I\rangle$. In other words,
$\bigcup_{I\in\mathscr{B}_i}\langle I\rangle$ is a partition of $\mathscr{B}_i$, written by
$$\mathscr{B}_i=\langle I_1\rangle\cup\langle I_2\rangle\cup\cdots\cup\langle I_t\rangle.$$
Thus,
$$\sum_{I\in \mathscr{B}}(-1)^{|I|}\mu\left(\bigcap_{i\in I}A_i\right)=\sum_{i=1}^k\sum_{I\in \mathscr{B}_i}(-1)^{|I|}\mu\left(\bigcap_{i\in I}A_i\right)$$
$$=\sum_{i=1}^k\sum_{j=1}^t\sum_{I\in\langle I_j\rangle}(-1)^{|I|}\mu\left(\bigcap_{i\in I}A_i\right)=0.$$
So (6) follows directly, which completes our proof.
\end{proof}

\noindent{\bf Remark}. Theorem \ref{thm2} is an extension of Theorem \ref{thm1} and may reduce more terms:

Firstly, let $\mathfrak{X}$ be defined as in  Theorem \ref{thm1}. Set $\{B_1,B_2,\cdots,B_k\}=\mathfrak{X}$ and, for $i\in\{1,2,\cdots,k\}$,   set $B^*_i=B'_i$ and let $b_i=\min B'_i$ (the minimum element in $B'_i$). Without loss of generality, we may assume that $b_1\leq b_2\leq \cdots \leq b_k$.

If  $I\in \mathfrak{I}$, say $I$ contains exactly $B_{i_1},B_{i_2},\cdots,B_{i_p}$ with $p>0$ and $i_1<i_2<\cdots<i_p$, then we claim that $I\in\mathscr{B}_{i_1}$ and, therefore, $I\in\mathscr{B}$.

Suppose to the contrary that $I\notin\mathscr{B}_{i_1}$. Then there is $j<i_1$ such that $I\supseteq B_j\setminus B'_{i_1}$. On the other hand, by the minimality of $i_1$, we have $I\nsupseteq B_j$ since $j<i_1$. This means that there is $b\in B'_{i_1}$ such that $b\in B_j$. Therefore, $b<b_j$ since $b_j$ is an upper bound of $B_j$. This is a contradiction since $b_j\leq b_{i_1}\leq b$. Our claim follows.

Conversely, if $I\in\mathscr{B}$, say   $I\in\mathscr{B}_{i}$, then  we have $I\supseteq B_i$ and, therefore, $I\in \mathfrak{I}$.

As a result, we have $\mathfrak{I}=\mathscr{B}$. Thus, (\ref{main}) implies (3).

Secondly, if $\{B,B^*\}$ is a pair such that $B$ differs from $B_1,B_2,\cdots,B_k$; $\{B,B^*\}$ satisfies (5); $\{B,B'\}$ does not satisfy (2); $B_i\setminus B^*\nsubseteq B$ for any $i=1,2,\cdots,k$. Then $\mathscr{B}$ can contain $B$ as an element while $\mathfrak{X}$ and therefore $\mathfrak{I}$ cannot contain $B$ as an element. This means that $\mathscr{B}\supsetneqq \mathfrak{I}$, that is, (6) reduces more terms than (3) does. \hfill$\square$

\section{Examples in graph polynomials}
As examples, in this section we  apply Theorem \ref{thm2} to chromatic polynomial of hypergraph, independence and domination polynomial of graph. We will see that the ordering-free method reduces more terms than the ordering-based method.

Let  $P(G,x)$ be a graph polynomial of a graph $G$ represented in the form of inclusion-exclusion principle, i.e.,
\begin{equation*}
P(G,x)=\sum_{F\subseteq E(G)}(-1)^{|F|}p(F,x),
\end{equation*}
where $E(G)$ is the edge set of $G$ and $p(F,x)$ is a polynomial in $x$ associated with $F\subseteq E(G)$. We specialize the index set  $P$ to be $E(G)$ and, for any $F\subseteq E(G)$, set
\begin{equation}
\mu\left(\bigcap_{e\in F}{A_e}\right)=p(F,x).
\end{equation}

For a pair $B,B^*\subseteq E(G)$ with $B\cap B^*=\emptyset$, if $B^*$ is a single-edge set, say $B^*=\{b\}$, then the condition
\begin{equation*}
\bigcap_{e\in B}{A_e}\subseteq \bigcup_{e\in B^*}{A_e},
\end{equation*}
i.e., $\bigcap_{e\in B}{A_e}\subseteq A_b$, is equivalent to
\begin{equation*}
\bigcap_{e\in B}{A_e}=\bigcap_{e\in B\cup\{b\}}{A_e}.
\end{equation*}
Combining with (8), we have
\begin{equation}
p(B,x)=p(B\cup\{b\},x).
\end{equation}

Thus, a pair $\{B,\{b\}\}$ (viewed as $\{B_i,B_i^*\}$) satisfies the requirement of Theorem \ref{thm2} provided it satisfies (9). We refer to such pair $\{B,b\}$ as a {\it broken pair} of $P(G,x)$ and $B$ a {\it broken set} if $B$ is minimal (i.e., $B$ has no proper subset satisfying (9)). Further, given a linear ordering `<' on $E(G)$, we call $B$ a {\it broken pair with respect to} `<' if $\{b\}=B'$. By Theorem \ref{thm2} we have the following corollary immediately.
\begin{corollary}\label{cor} Let $\{B_1,B^*_1\},\{B_2,B^*_2\},\cdots,\{B_k,B^*_k\}$ be broken pairs of $P(G,x)$. Then
\begin{equation*}
P(G,x)=\sum_{F\in 2^{E(G)}\setminus \mathscr{B}}(-1)^{|F|}p(F,x).
\end{equation*}
\end{corollary}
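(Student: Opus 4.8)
The plan is to deduce Corollary~\ref{cor} directly from Theorem~\ref{thm2} by unpacking the dictionary already set up in the paragraph preceding the corollary. First I would observe that the specialization~(8), $\mu\left(\bigcap_{e\in F}A_e\right)=p(F,x)$, turns the conclusion~(6) of Theorem~\ref{thm2} verbatim into the claimed identity $P(G,x)=\sum_{F\in 2^{E(G)}\setminus\mathscr{B}}(-1)^{|F|}p(F,x)$, with the index set $P$ taken to be $E(G)$ and the sets $\mathscr{B}_i$, $\mathscr{B}$ formed exactly as in~(4) from the given pairs $\{B_i,B_i^*\}$. So the only thing that actually needs checking is that each broken pair $\{B_i,B_i^*\}$ satisfies the hypotheses of Theorem~\ref{thm2}, namely $B_i\cap B_i^*=\emptyset$ together with the containment~(5), $\bigcap_{e\in B_i}A_e\subseteq\bigcup_{e\in B_i^*}A_e$.

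The disjointness $B_i\cap B_i^*=\emptyset$ is built into the definition of a broken pair, so that is free. For the containment, I would invoke the equivalence established just above the corollary: for a pair $\{B,\{b\}\}$ with $B\cap\{b\}=\emptyset$, the condition $\bigcap_{e\in B}A_e\subseteq A_b$ is equivalent to $\bigcap_{e\in B}A_e=\bigcap_{e\in B\cup\{b\}}A_e$, which by~(8) is equivalent to $p(B,x)=p(B\cup\{b\},x)$, i.e.\ to~(9). Since a broken pair of $P(G,x)$ is by definition a pair $\{B,b\}$ satisfying~(9), each $\{B_i,B_i^*\}$ (with $B_i^*$ a single-edge set, as required for broken pairs) therefore satisfies~(5). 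Hence all hypotheses of Theorem~\ref{thm2} hold for the family $\{B_1,B_1^*\},\dots,\{B_k,B_k^*\}$, and applying the theorem and rewriting via~(8) yields the corollary.

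There is really no deep obstacle here — the corollary is an immediate translation of Theorem~\ref{thm2} into the graph-polynomial language, and the substantive work (the equivalence~(9)$\Leftrightarrow$(5) for singleton $B^*$, and the inclusion-exclusion cancellation inside Theorem~\ref{thm2}) has already been done. The one point I would state carefully is that broken pairs are defined with $B^*$ a single edge, so the sets $\mathscr{B}_i=\{I\subseteq P: I\supseteq B_i,\ I\nsupseteq B_j\setminus B_i^*\ \text{for}\ j<i\}$ and their union $\mathscr{B}$ in the corollary are exactly the $\mathscr{B}_i$, $\mathscr{B}$ of Theorem~\ref{thm2} specialized to $B_i^*=\{b_i\}$; no new combinatorics is introduced. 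Thus the proof is a one-line appeal to Theorem~\ref{thm2} once the translation is recorded, which is presumably why the paper states it follows ``immediately.''
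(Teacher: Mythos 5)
Your proposal is correct and follows exactly the route the paper intends: the paper offers no explicit proof beyond noting that a pair $\{B,\{b\}\}$ satisfying (9) meets the hypotheses of Theorem \ref{thm2}, and your verification of the disjointness and of the equivalence (9) $\Leftrightarrow$ (5) for singleton $B^*$, followed by the substitution (8), is precisely that argument spelled out. No difference in approach worth noting.
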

\noindent{\bf Chromatic polynomial of hypergraph}. The chromatic polynomial $\chi(H,x)$ of a simple hypergraph $H$ counts the number of the vertex colorings such that each (hyper) edge of cardinality at least two has two vertices of distinct colors \cite{Berge,Dohmen01}. The following inclusion-exclusion expression was given in \cite{Dohmen01,Trinks}:
$$\chi(H,x)=\sum_{F\subseteq E(H)}(-1)^{|F|}x^{c(F)},$$
where $c(F)$ is the number of the components of the spanning subgraph of $H$ with edge set $F$.

Given a linear order `<' on the edge set $E(H)$,  Dohmen \cite{Dohmen01} generalized the Whitney's broken circuit theorem to hypergraph by extending the broken circuit defined on a cycle (see \cite{Berge} for the definition of a cycle), with a particular constraints that each edge of the cycle is included
by the union of the other edges of that cycle. A set $F\subseteq E(H)$ is called a {\it $\delta$-cycle} if $F$ is minimal such that $c(F\setminus\{f\})=c(F)$ for each $f\in F$. We note that every cycle with the above particular constraints is or contains a $\delta$-cycle while a $\delta$-cycle is not necessarily a cycle with this constraints. A set $B$ is called a {\it broken cycle} if $B$ is obtained from a $\delta$-cycle by deleting its maximum edge. In \cite{Trinks}, Trinks generalized the Dohmen's result by extending the broken circuit to broken cycle.

 For $B\subseteq E(H)$ and $b\in E(H)\setminus B$, by (9) it can be seen that $B$ is a broken set of $\chi(H,x)$ provided $B$ is minimal such that 
\begin{equation} 
c(B)=c(B\cup\{b\}).
\end{equation}
 We can see that the notion `broken set' for hypergraph is an extension of `broken cycle'. Moreover, in condition (10) there is no need to require $b$ to be the maximum edge of $B\cup\{b\}$ for a broken set.

Let's consider  the hypergraph $H=(V,E)$ with vertex set $V=\{1,2,3,4,5,6\}$ and edge set $E=\{\{1,2,3\},\{3,4,5\},\{2,3,4\},\{1,2,6\}\}$. We note that $H$ contains neither broken circuit (with the particular constraints) nor broken cycle,  no matter how to order its edges. This means that no terms in $\chi(H,x)$ can be reduced by broken circuit or broken cycle.

For an edge $\{i,j,k\}$ we write it simply as $ijk$. By (10) it can be seen that $H$ has two broken sets $B_1=\{123,345\}$ with $B_1^*=\{b_1\}=\{234\}$ and $B_2=\{234,126\}$ with $B_2^*=\{b_2\}=\{123\}$. Therefore,\\
$\mathscr{B}_1=\{\{123,345\},\{123,345,234\},\{123,345,126\},\{123,345,234,126\}\}$ and\\
$\mathscr{B}_2=\{\{234,126\},\{234,126,123\}\}$.

Consider the edge ordering  $123<345<234<126$. Again by (10), $H$ contains only one broken set with respect to `<', i.e., $B=\{123,345\}$ with $B'=\{234\}$. Thus, $\mathfrak{X}=\{B\}$ (see Theorem \ref{thm1}) and \\ $\mathfrak{I}=\{\{123,345\},\{123,345,234\},\{123,345,126\},\{123,345,234,126\}\}=\mathscr{B}_1$.

So by Theorem \ref{thm1} and Corollary \ref{cor},  the chromatic polynomial of $H$ is
$$\chi(H,x)=\sum_{F\in 2^{E}\setminus\mathfrak{I}}(-1)^{|F|}x^{c(F)}=\sum_{F\in 2^{E}\setminus(\mathscr{B}_1\cup\mathscr{B}_2)}(-1)^{|F|}x^{c(F)}=k^6-4k^4+3k^3+k^2-k.$$
Moreover, we see that $|2^{E}|=16>|2^{E}\setminus\mathfrak{I}|=12>|2^{E}\setminus(\mathscr{B}_1\cup\mathscr{B}_2)|=10$.

Finally, it can be seen that $H$ has at most one broken set with respect to `<', no matter how to  define the order `<'.

\noindent{\bf Independence polynomial of graph}. For  a graph $G$, the independence polynomial \cite{Gutman02,Hoede01} of $G$ can be represented as the following  inclusion-exclusion formula \cite{Dohmen05}:
\begin{equation}
I(G,x)=\sum_{F\subseteq E(G)}(-1)^{|F|}x^{|G[F]|}(1+x)^{n-|G[F]|},
\end{equation}
where $|G[F]|$ is the number of vertices in the subgraph of $G$ induced by $F$.

It was shown  \cite{Dohmen05} that the Whitney's broken circuit theorem is also valid for independence polynomial. By (9) and (11), a set $B$ of edges is a broken set provided $B$ is minimal such that $G[B]=G[B\cup\{b\}]$ for some $b\notin B$. This means that $B=\{e_1,e_2\}$ and $e_1be_2$ is a path or a cycle of length 3. We call such $B$ a {\it broken path}. We note that every broken circuit includes a broken path as a subgraph.

Let's consider  the path $G=e_1e_2e_3e_4$ of length 4 with edge ordering  $e_1<e_3<e_2<e_4$. Similar to the previous example, we have $B_1=\{e_1,e_3\}$ with $B_1^*=\{e_2\}$ and $B_2=\{e_2,e_4\}$ with $B_2^*=\{e_3\}$, and $\mathfrak{X}=\{\{e_1,e_3\}\}$. Therefore: \\
$\mathscr{B}_1=\{\{e_1,e_3\},\{e_1,e_2,e_3\},\{e_1,e_3,e_4\},\{e_1,e_2,e_3,e_4\}\}$;\\
$\mathscr{B}_2=\{\{e_2,e_4\},\{e_2,e_3,e_4\}\}$; and\\
$\mathfrak{I}=\{\{e_1,e_3\},\{e_1,e_2,e_3\},\{e_1,e_3,e_4\},\{e_1,e_2,e_3,e_4\}\}=\mathscr{B}_1.$

\noindent{\bf Domination polynomial of graph}. For a graph $G$ and $W\subseteq V(G)$, denote by $N[W]$ the closed neighbourhood of $W$, i.e.,
$$N[W]=W\cup\{v:v\ {\rm is\ adjacent\ to\ some\ vertex\ in}\ W\}.$$

Let $d_i$ be the number of the sets $W$ of $i$ vertices such that $N_G[W]=V(G)$. The domination polynomial $D(G,x)$ is defined by
$D(G,x)=\sum_{i=1}^nd_ix^i.$ The following form was given in \cite{Dohmen08},
\begin{equation}
D(G,x)=\sum_{W\subseteq V(G)}(-1)^{|W|}(1+x)^{n-|N[W]|}.
\end{equation}

 A set $B$ is called {\it broken neighbourhood} if $B=N(v)$ and $v=\max N[v]$. In \cite{Dohmen08}, Dohmen and Tittmann proved that the sum in (12) can be restricted to those subsets of vertices which do not contain any  broken neighbourhood.

Due to (12), we replace the role of edges in (9) by vertices. For $B\subseteq V(G)$ and $b\in V(G)\setminus B$, by (9) it can be seen that $B$ is a broken set of $D(G,x)$ provided $B$ is minimal such that $|N[B]|=|N[B\cup\{b\}]|,$ i.e.,
\begin{equation}
N[b]\subseteq N[B].
\end{equation}
We can see that the`broken set' of $D(G,x)$ is an extension of `broken neighbourhood'.

Consider  the path $P=v_1v_2v_3v_4$ with vertex ordering $v_1<v_4<v_3<v_2$. Similarly, by (13) we have $B_1=\{v_1,v_3\}$ with $B_1^*=\{v_2\}$, $B_2=\{v_1,v_4\}$ with $B_2^*=\{v_2\}$  and $B_3=\{v_2,v_4\}$ with $B_3^*=\{v_3\}$, and $\mathfrak{X}=\{\{v_1,v_3\},\{v_1,v_4\}\}$. Therefore: \\
$\mathscr{B}_1=\{\{v_1,v_3\},\{v_1,v_2,v_3\},\{v_1,v_3,v_4\},\{v_1,v_2,v_3,v_4\}\}$;\\
$\mathscr{B}_2=\{\{v_1,v_4\},\{v_1,v_2,v_4\}\}$;\\
$\mathscr{B}_3=\{\{v_2,v_4\},\{v_2,v_3,v_4\}\}$; and\\
$\mathfrak{I}=\{\{v_1,v_3\},\{v_1,v_2,v_3\},\{v_1,v_3,v_4\},\{v_1,v_2,v_3,v_4\},\{v_1,v_4\},\{v_1,v_2,v_4\}\}=\mathscr{B}_1\cup\mathscr{B}_2.$

\section*{Acknowledgments}
This work was supported by the National Natural Science Foundation of China [Grant numbers, 11471273, 11561058].

\end{document}